\theoremstyle{plain} 
\newtheorem{cor}{Corollary}
\newtheorem{lem}{Lemma}
\newtheorem{thm}{Theorem}
\theoremstyle{definition}
\theoremstyle{remark}
 \DeclareMathOperator{\diag}{diag}
\DeclareMathOperator{\pd}{\partial}
\DeclareFontFamily{OT1}{pzc}{}
\DeclareFontShape{OT1}{pzc}{m}{it}{<-> s * [1.10] pzcmi7t}{}
\DeclareMathAlphabet{\mathpzc}{OT1}{pzc}{m}{it}
\DeclareMathSymbol{\R}{\mathalpha}{AMSb}{"52}
\DeclareMathSymbol{\C}{\mathalpha}{AMSb}{"43}
\newcommand{\mbb}[1]{\mathbb{#1}}
\newcommand{\K}{\mbb{K}}
\newcommand*{\Scale}[2][4]{\scalebox{#1}{$#2$}}%
\newcommand{\set}[1]{\left\{#1\right\}}
\newcommand{\comment}[1]{}
\newcommand{\ra}{\rightarrow}
\newcommand{\pzcm}{\mathpzc{m}}
\newcommand{\fg}{\mathfrak{g}}
\newcommand{\fq}{\mathfrak{q}}
\newcommand{\bv}{\mathbf{v}}
\newcommand{\by}{\mathbf{y}}
\newcommand{\bw}{\mathbf{w}}
\newcommand{\mcm}{\mathcal{M}}
\newcommand{\mcs}{\mathcal{S}}
\newcommand{\beqn}{\begin{equation}}
\newcommand{\eeqn}{\end{equation}}
\newcommand{\balign}{\begin{align}}
\newcommand{\ealign}{\end{align}}
\newcommand{\bsube}{\begin{subequations}}
\newcommand{\esube}{\end{subequations}}
\begin{document}



\title[]{Algebraic structure and maximal dimension of the symmetry algebra for arbitrary systems of ODEs}

\author[]{J.C. Ndogmo}


\address{Department of Mathematics and Applied Mathematics\\
University of Venda\\
P/B X5050, Thohoyandou\\
Limpopo  0950, South Africa}

\email{jean-claude.ndogmo@univen.ac.za}

\begin{abstract}
It is known  for scalar ordinary differential equations, and for systems of ordinary differential equations of order not higher than the third, that their Lie point symmetry algebras is of maximal dimension if and only if they can be reduced by a point transformation to the trivial equation $\mathbf{y}^{(n)}$=0.  For arbitrary systems of ordinary differential equations of order $n \geq 3$ reducible by point transformations to the trivial equation, we determine the complete structure of their Lie point symmetry algebras as well as that for their variational, and their divergence symmetry algebras. As a corollary,  we obtain the maximal dimension of the Lie point symmetry algebra for any system of linear or nonlinear ordinary differential equations.
\end{abstract}

\keywords{Algebraic structure, Lie symmetry algebras, Maximal dimensions, Systems of ordinary differential equations}
\subjclass[2010]{17B66;  58D19; 34A30}

\maketitle

\section{Introduction}
\label{s:intro}

It has long been established \cite{liecanonic} for scalar $n$th order ordinary differential equations ({\small \sc ode}s) that they are of maximal Lie point symmetry  if and only if they can be reduced by an invertible point transformation to the canonical form $y^{(n)}=0$ (see also
\cite[Theorem 14]{olver94} and \cite[Theorems 6.39 and 6.43]{olver-EIS}).  Krause and Michel \cite{KM} also showed a similar result for \emph{scalar} linear equations, and that for this type of equations being of maximal symmetry is in fact equivalent to being iterative. These equivalence relationships point to some intriguing properties that {\small \sc ode}s of maximal symmetry should entertain. Lie \cite{liecanonic} showed that for second-order scalar  {\small \sc ode}s, the Lie algebra of any given equation of maximal symmetry is isomorphic to  $\mathfrak{sl}_3,$ and Gonz\'alez-L\'opez \cite{lopez-slde} extended this result to systems of $\mathpzc{m}$ linear second-order {\small \sc ode}s by showing that any such system is of maximal symmetry if and only if it's symmetry algebra is isomorphic to $\mathfrak{sl}_{\mathpzc{m}+2}.$\par

The structure of the symmetry algebra of linear equations of maximal symmetry has also been found \cite{KM} for scalar equations of order $n \geq 3,$ and such a result is based on another result of Lie \cite{lie-dn} which determines the maximal dimension of the symmetry algebra for any scalar {\small \sc ode} of a general order. However, the maximal dimension of the symmetry algebra is not known for systems of {\small \sc ode}s of arbitrary dimensions, and not even in the restricted case of linear systems,  although some upper bounds have been suggested. Indeed, Gonz\'alez-Gasc\'on and Gonz\'alez-L\'opez \emph{estimated}  in \cite{gonzal-newres} \emph{upper bounds} for the dimension of the symmetry algebra of systems of {\small \sc ode}s  of the general form
\begin{equation} \label{gen-sde}
\mathbf{y}^{(n)} = F(x, \mathbf{y}_{(n-1)}), \qquad \mathbf{y} \in \R^{\mathpzc{m}},
\end{equation}
where $\mathbf{y}_{(n-1)}$ denotes $\mathbf{y}= \mathbf{y}(x)$ and all its derivatives up to the order $n-1$.  The values thus found for these upper bounds were $\mathpzc{m}^2 + 4\mathpzc{m} +3$ for $n=2$ and $\mathpzc{m}^2 + n\mathpzc{m} +3,$ for $n \geq 3.$ The upper bound value found for $n=2$ had also been obtained by Marcus  \cite{markus} in an earlier study. Moreover, according to a result of Fels  \cite{fels-bk, bk014}  a system of {\small \sc ode}s \eqref{gen-sde} of order $n\leq 3$ is of maximal symmetry if and only if it is reducible by an invertible point transformation to the form $\by^{(n)}=0,$ which we shall refer to as the \emph{canonical form}.  The corresponding class of equivalent equations under invertible point transformations will be called the \emph{canonical class}. Thus for $n \geq 4$ it has not yet been established when a system of {\small \sc ode}s has maximal symmetry nor what the maximal dimension of the symmetry algebra is.\par

In this paper, we determine for the order $n\geq 3,$ the structure of the symmetry algebra for systems of equations   in any given canonical class, i.e. for those \emph{systems of linear or nonlinear} {\small \sc ode}s that are equivalent by a point transformation to a canonical form.  As a corollary, and based on the cited result of \cite{gonzal-newres}, we obtain the maximal dimension of the symmetry algebra for any system of {\small \sc ode}s of the form  \eqref{gen-sde}.
We also determine the structure of the Lie algebras of variational symmetries and divergence symmetries for these systems. The connection between the structure of the Lie point symmetry algebra thus found for the order $n \geq 3$ and that found in \cite{lopez-slde} for the corresponding second-order systems if fully described.  In particular, we show that the two structures only differ by an additional subalgebra of so-called non-Cartan symmetries  that second-order equations possess. These non-Cartan symmetries are known only for scalar linear equations \cite{moyoL, charalambous}, and we obtain their generalization to arbitrary dimensions . In the sequel the field of scalars $\K$ of the system of equations to be studied will be assumed to be an arbitrary field of characteristic zero.

\section{The Lie point symmetry algebra}
\label{s:liesym}

Under a given diffeomorphism $\sigma$ of the space $\mcm= \K \times \K^{\pzcm}$ of the independent variable $x$ and dependent variables $\by= (y_1, \dots, y_\pzcm),$  one has  $\sigma_*[\bv, \bw]=[\sigma_* (\bv), \sigma_*(\bw)]$ for any  vector fields $\bv$ and $\bw$ on $\mcm,$ where $\sigma_*$ denotes the push-forward of $\sigma.$   Consequently the symmetry algebras of equivalent equations are isomorphic and satisfy the same commutation relations in some given bases. To describe the structure of the symmetry algebras in the canonical classes, it should therefore be enough to restrict our attention to the canonical forms themselves. However,  for this study we shall rather consider the whole equivalence class of arbitrary canonical forms for linear systems of {\small \sc ode}s, as this yields much clearer and much general results.\par

The \emph{normal form} of a general element in a given canonical class can be obtained for linear systems of {\small \sc ode}s   by applying the most general element of the Lie pseudo-group of invertible point transformations preserving normal forms, also called the equivalence group (for normal forms), to the
corresponding  canonical equation $\by^{(n)}=0.$ The expression of an element of the equivalence group as obtained in \cite{char-nj} is given by
\beqn  \label{eqvnor}
x= f(z),\qquad  \by = f'(z)^{(n-1)/2}\, C \bw,
\eeqn
where $f$ is an arbitrary function and $C$ an arbitrary $\pzcm \times \pzcm$ constant matrix. It turns out that this expression coincides with the well known and corresponding expression for scalar equations.  The renaming of variables in the transformed equations will often be assumed. Let
\beqn \label{noreq1}
y^{(n)} + A_n^2\, y^{(n-2)} + \dots + A_n^j\, y^{(n-j)} + \dots A_n^n\, y=0
\eeqn
be the corresponding transformation of the scalar equation $y^{(n)}=0$   under \eqref{eqvnor}, where the $A_n^j= A_n^j (x) \in \K$ are scalars. Then as shown in \cite{char-nj}, in its normal form an  element in a canonical class  of linear {\small \sc ode}s consists of $\pzcm$ copies of \eqref{noreq1}. That is, it is an \emph{isotropic system} of the form
\beqn \label{noreqM}
\by^{(n)} + A_n^2\, \by^{(n-2)} + \dots + A_n^j\, \by^{(n-j)} + \dots A_n^n\, \by=0,
\eeqn
and represents indeed the transformation of $\by^{(n)}=0$ under \eqref{eqvnor}, and with the same value for $f$ as for \eqref{noreq1}. Moreover, for any given isotropic system of the form \eqref{noreqM} and with given arbitrary coefficients $A_n^j,$  we know  how to find the corresponding invertible transformation \eqref{eqvnor} mapping this system to the canonical equation $\by^{(n)}=0.$  Indeed, let
\beqn \label{srce}
y'' + \fq y=0,
\eeqn
where $\fq= A_2^2,$ be the second-order scalar equation obtained as a transformation of the free fall equation $y''=0$ under \eqref{eqvnor} and with the same function $f$ as for \eqref{noreq1}. We thus refer to \eqref{srce} as the second-order \emph{source equation} for \eqref{noreq1}. Let $u=u(x)$ be a nonzero solution of \eqref{srce} and denote by $\diag (\lambda_1, \dots, \lambda_p)$ a diagonal matrix with diagonal entries $\lambda_1, \dots, \lambda_p.$ Then the more specific transformation \eqref{eqvnor} of the form
$$
z= \int \frac{dx}{u^2},\qquad  \bw = u^{1-n} \diag (\lambda_1, \dots, \lambda_\pzcm)\, \by,
$$
where the $\lambda_j$ are scalars, maps $\bw^{(n)}=0$ to the given equation \eqref{noreqM}, and this follows from  the fact that \eqref{noreqM} is isotropic. On the other hand, if $u$ and $v$ are two linearly independent solutions of \eqref{srce}, then $n$ linearly independent solutions of the scalar equation \eqref{noreq1} are given by
\beqn \label{sk}
s_k= u^{n-1-k} v^k, \qquad \text{for $k=0, \dots, n-1.$}
\eeqn
Moreover the Wronskian  $\mathpzc{w} (u,v)$ of $u$ and $v$ is a nonzero constant since \eqref{srce} is in normal form, and we shall assume this constant to be normalized to one.\par

  Before we state and prove the main result (Theorem \ref{struct}), we wish to make some few more remarks. Let
\beqn \label{hij-skj}
H_{ij}= y_i \pd_j, \qquad \text{ and }\qquad  S_{kj} = s_k \pd_j
\eeqn
be vector fields on $\mcm,$ where $\pd_j= \pd_{y_j}$ and $s_k$ is a solution of \eqref{noreq1} as given by \eqref{sk}. The one-parameter subgroup generated by $H_{ij}$ is given by
\begin{subequations} \label{grpHij}
\begin{align}
z& =x,  \text{ and }  \\
w_p &=  \delta_{pj}\, t y_i + y_p,\qquad \text{ for $p=1, \dots, \pzcm$}
\end{align}
\end{subequations}
where $t$ is the group parameter and $\delta$ represents the Kronecker delta. Owing to the fact that the system \eqref{noreqM} is both linear and isotropic, it is clear that \eqref{grpHij} is a symmetry transformation for this system. On the other hand the group transformation generated by $S_{kj}$ is given by
\begin{subequations} \label{grpSkj}
\begin{align}
z& =x,  \text{ and }  \\
w_p &=   \delta_{pj}\, t s_k + y_p,\qquad \text{ for $p=1, \dots, \pzcm$},
\end{align}
\end{subequations}
and these transformations also leave \eqref{noreqM} invariant owing to the fact that the system is not only linear, but also isotropic. In the sequel all Lie algebras considered will be assumed to have base field $\K.$ Thus in particular $\mathfrak{gl}_m$ will mean $\mathfrak{gl}(m, \K),$ etc. The symbols $\oplus$ and $\dotplus$ will also represent the direct sum of Lie algebras and the direct sum of vector spaces, respectively. Denote by $\mathfrak{g}_H$ and $\mathfrak{g}_S$ the vector spaces generated by the $H_{ij}$ (for $i,j= 1, \dots, \pzcm$) and the $S_{k,j}$ (for $k=0, \dots, n-1, \text{ and } j=1, \dots, \pzcm$), respectively.\par
\begin{lem} \label{hij-hpq}
\begin{enumerate}\mbox{}
\item[\rm{(a)}] $\mathfrak{g}_H$ is a  Lie algebra with commutation relations
\beqn \label{comhij}
[H_{ij}, H_{pq}] = \delta_{pj} H_{iq} - \delta_{iq} H_{pj}.
\eeqn
In particular, $\mathfrak{g}_H$ is a reductive Lie algebra isomorphic to $\mathfrak{gl}_\pzcm,$ and having as a Lie algebra the direct sum decomposition
$$
\mathfrak{g}_H =  \mathfrak{s}_H \oplus \K H_0,
$$
where $H_0= \sum_i H_{ii}$ generates the center of $\mathfrak{g}_H$ and $\mathfrak{s}_H$ is isomorphic to $\mathfrak{sl}_\pzcm,$ and has generators the $H_{ij}$ with $i\neq j$ together with the $\pzcm -1$ vectors $H_{ii} -H_{i+1,\, i+1},$ for $i=1, \dots, \pzcm-1.$
\item[\rm{(b)}] The space $\mathfrak{g}_S$  is an abelian Lie algebra, and $\mathfrak{g}_H \dotplus \mathfrak{g}_S$ is also a Lie algebra of dimension $\pzcm^2 + n \pzcm$.
\end{enumerate}
\end{lem}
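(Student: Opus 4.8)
The proof is a matter of direct Lie-bracket computations together with the standard structure theory of $\mathfrak{gl}_\pzcm$. For part (a), the plan is first to compute $[H_{ij}, H_{pq}] = [y_i \pd_j, y_p \pd_q]$ straight from the definition of the bracket of vector fields; since $y_i \pd_j(y_p) = \delta_{jp}\, y_i$ and the second-order terms cancel, one obtains at once the relation \eqref{comhij}, which is precisely the commutation rule of the elementary matrices $E_{ij} \in \mathfrak{gl}_\pzcm$. As the $\pzcm^2$ vector fields $H_{ij}$ are visibly $\K$-linearly independent, the assignment $H_{ij} \mapsto E_{ij}$ is then a Lie algebra isomorphism $\mathfrak{g}_H \cong \mathfrak{gl}_\pzcm$. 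The remaining assertions of (a) are the classical facts about $\mathfrak{gl}_\pzcm$, transported through this isomorphism: it is reductive, its center is the line spanned by the identity matrix (corresponding to $H_0 = \sum_i H_{ii}$, whose centrality is read off directly from \eqref{comhij}), it splits as the Lie algebra direct sum $\mathfrak{sl}_\pzcm \oplus \K I$, and $\mathfrak{sl}_\pzcm$ is spanned by the off-diagonal generators together with the $\pzcm - 1$ traceless diagonal elements $H_{ii} - H_{i+1,\, i+1}$.

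For part (b), I would first observe that each $s_k = s_k(x)$ depends only on the independent variable, so $\pd_j(s_k) = 0$ for all $j$; hence $[S_{kj}, S_{lp}] = [s_k \pd_j,\, s_l \pd_p] = 0$, which proves that $\mathfrak{g}_S$ is abelian. Since by \eqref{sk} the $s_k$, $k = 0, \dots, n-1$, form $n$ linearly independent functions, the $n\pzcm$ fields $S_{kj}$ are $\K$-linearly independent and $\dim \mathfrak{g}_S = n\pzcm$. Next I would compute the mixed bracket: again using $\pd_j(s_k) = 0$ together with $\pd_l(y_i) = \delta_{il}$, the second-order terms cancel and one gets $[H_{ij}, S_{kl}] = -\delta_{il}\, s_k \pd_j = -\delta_{il}\, S_{kj} \in \mathfrak{g}_S$. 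Therefore $\mathfrak{g}_S$ is an ideal in the vector-space sum $\mathfrak{g}_H \dotplus \mathfrak{g}_S$, which is consequently closed under the bracket and hence a Lie algebra, in fact the semidirect product $\mathfrak{g}_H \ltimes \mathfrak{g}_S$. Its dimension equals $\dim \mathfrak{g}_H + \dim \mathfrak{g}_S = \pzcm^2 + n\pzcm$ once one notes that $\mathfrak{g}_H \cap \mathfrak{g}_S = 0$: a nonzero element of $\mathfrak{g}_H$ has coefficients depending genuinely on the $y$-variables, whereas every element of $\mathfrak{g}_S$ has coefficients depending on $x$ alone.

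There is essentially no serious obstacle here; the only points requiring a little care are the bookkeeping of Kronecker deltas and signs in the bracket computations, and the two $\K$-linear-independence claims needed to fix the dimensions (that the $H_{ij}$ are independent, and that the $s_k$ — being $n$ linearly independent solutions of \eqref{noreq1} — are independent, whence also $\mathfrak{g}_H \cap \mathfrak{g}_S = 0$). The structural statements in (a) then follow immediately from the textbook description of $\mathfrak{gl}_\pzcm$.
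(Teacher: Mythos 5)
Your proof is correct and follows essentially the same route as the paper's: a direct bracket computation identifying $\mathfrak{g}_H$ with $\mathfrak{gl}_\pzcm$ via the elementary matrices, the standard structure theory for the reductive decomposition, and the mixed-bracket computation $[H_{ij}, S_{kl}] = -\delta_{il} S_{kj}$ showing closure of $\mathfrak{g}_H \dotplus \mathfrak{g}_S$. Your argument is in fact slightly more explicit than the paper's on the linear-independence and $\mathfrak{g}_H \cap \mathfrak{g}_S = 0$ points, which the paper dispatches with ``clearly.''
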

\begin{proof}
One can  see why \eqref{comhij} holds by considering the cases $j=q$ and $j \neq q,$  and by applying the simple rules for the Lie bracket of vector fields. On the other hand, \eqref{comhij} are the same as the commutation relations of $\mathfrak{gl}_\pzcm$ in its standard basis $\set{e_{ij}},$ where $e_{ij}$ is the $\pzcm \times \pzcm$ (elementary) matrix with entry $1$ in the $i$th row and the $j$th column and $0$ elsewhere. Thus the map $\varphi (H_{ij})= e_{ij}$ defined from $\mathfrak{g}_H$ to $\mathfrak{gl}_\pzcm$ is a Lie algebra isomorphism, as the $\pzcm^2$ vector fields  $H_{ij}$ clearly also form a basis of $\mathfrak{g}_H.$ Since $\mathfrak{gl}_\pzcm$ is reductive, the rest of part (a) of the lemma follows from the well-known construction \cite{hump} of a basis for $\mathfrak{sl}_\pzcm$ and for the center of $\mathfrak{gl}_\pzcm,$ as well as the decomposition of $\mathfrak{gl}_\pzcm$ as direct sum of $\mathfrak{sl}_\pzcm$ and its one-dimensional center generated by $\sum_i e_{ii}.$\par
It is also straightforward to note that the $S_{kj}$  satisfy the  commutation relations $[S_{p\, i}, S_{qj}]=0$ for all $i,j$ and for all $p,q,$ and thus $\mathfrak{g}_S$ is abelian. On the other hand we have $[ S_{k,p}, H_{ij}] = \delta_{pi} S_{kj}$ and as the $\pzcm^2 + n \pzcm$ vectors $H_{ij}$ and $S_{kp}$ are clearly linearly independent,  this proves the rest of part (b) and completes the proof of the lemma.
\end{proof}

Denote by $\mathfrak{g}$ the (Lie point) symmetry algebra of \eqref{noreqM}, and let $\mathfrak{g}_F$   be the complement subspace of $(\mathfrak{g}_H \dotplus \mathfrak{g}_S)$ in $\mathfrak{g}.$ Note that a solution of the isotropic system \eqref{noreqM} is a vector $(s^1, \dots, s^\pzcm)$ where each $s^j$ is a solution of the scalar equation \eqref{noreq1}.
\begin{lem} \label{gf}
A vector field in $\mathfrak{g}_F$ has the form
\beqn \label{gengf}
\bv =  \xi \pd_x + \Scale[0.90]{ \sum_i} \phi \, y_i \pd_i,
\eeqn
for some functions $\xi=\xi(x)$  and $\phi = \phi(x).$ Moreover, $\xi \neq 0.$
\end{lem}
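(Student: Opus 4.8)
The plan is to write an arbitrary $\bv\in\mathfrak{g}$ in its general form and then successively strip off the parts lying in $\mathfrak{g}_S$ and in $\mathfrak{g}_H$. The starting point is the standard structure of the point symmetries of a linear system: for $n\ge 3$ a routine analysis of the determining equations, just as in the scalar case, shows that every $\bv\in\mathfrak{g}$ has the form $\bv=\xi(x)\,\pd_x+\sum_j\bigl(\sum_k a_{jk}(x)\,y_k+\sigma_j(x)\bigr)\pd_j$, with $\xi$ and the $a_{jk}$ depending on $x$ only and $\boldsymbol{\sigma}=(\sigma_1,\dots,\sigma_\pzcm)$ a solution of \eqref{noreqM}. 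By the observation preceding the lemma each $\sigma_j$ is then a solution of the scalar equation \eqref{noreq1}, hence a $\K$-linear combination of the $s_k$ of \eqref{sk}, so $\sum_j\sigma_j\pd_j$ already lies in $\mathfrak{g}_S$; subtracting it, I may assume $\eta_j=\sum_k a_{jk}(x)\,y_k$.

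Next I would substitute $\bv=\xi\,\pd_x+\sum_{j,k}a_{jk}(x)\,y_k\pd_j$ into the determining condition attached to the $l$-th component equation $y_l^{(n)}+A_n^2y_l^{(n-2)}+\dots+A_n^ny_l=0$ and read off, on the solution manifold of \eqref{noreqM}, the coefficient of $y_k^{(n-1)}$ for $k\ne l$. Since the equation contains no $(n-1)$-st derivative there is no term $\eta_l^{(n-1)}$, and the substitution $y_m^{(n)}=-\sum_{p\ge2}A_n^p y_m^{(n-p)}$ lowers orders to at most $n-2$; hence the only term of order $n-1$ in $y_k$ ($k\ne l$) is $n\,a_{lk}'(x)\,y_k^{(n-1)}$, and as $y_k^{(n-1)}$ is a free coordinate on the solution manifold this forces $a_{lk}'=0$, i.e.\ $a_{lk}=c_{lk}\in\K$ whenever $k\ne l$. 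The field $\sum_{j\ne k}c_{jk}H_{kj}$ belongs to $\mathfrak{g}_H$, so after subtracting it I am reduced to a diagonal field $\bv=\xi(x)\,\pd_x+\sum_j a_{jj}(x)\,y_j\pd_j$. Because the $l$-th component equation involves only $y_l$ and its derivatives, and the $y_l$-jet projection of the solution manifold of the isotropic system \eqref{noreqM} is exactly that of \eqref{noreq1}, each pair $(\xi,a_{jj})$ is a point symmetry $\xi\pd_x+a_{jj}(x)\,y\pd_y$ of the scalar equation \eqref{noreq1}. The determining equations being linear, $(0,a_{jj}-a_{11})$ is then again a symmetry of \eqref{noreq1}; but any vertical symmetry $b(x)\,y\pd_y$ of \eqref{noreq1} satisfies, on its solution manifold, $\eta^{(n)}=n\,b'(x)\,y^{(n-1)}+(\text{order}\le n-2)$, so $b$ must be constant. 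Hence $a_{jj}=\phi+d_j$ with $\phi:=a_{11}$ and $d_j\in\K$, and $\sum_j d_j H_{jj}\in\mathfrak{g}_H$; removing it leaves $\bv=\xi(x)\,\pd_x+\phi(x)\sum_j y_j\pd_j$, the asserted form. This argument in fact shows $\mathfrak{g}=\mathfrak{g}_0+(\mathfrak{g}_H\dotplus\mathfrak{g}_S)$, where $\mathfrak{g}_0$ denotes the set of symmetries of that form, so the complement $\mathfrak{g}_F$ may be, and is, taken inside $\mathfrak{g}_0$.

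For the last assertion, observe that running the argument of the previous paragraph with $\xi\equiv0$ from the start shows that every vertical symmetry of \eqref{noreqM} already lies in $\mathfrak{g}_H\dotplus\mathfrak{g}_S$ (the scalar reduction forces each $a_{jj}$ to be constant). Hence the complement $\mathfrak{g}_F$ contains no nonzero vertical field, so $\xi\ne0$ for every nonzero $\bv\in\mathfrak{g}_F$. I expect the main obstacle to be the very first step — showing that for $n\ge3$ an arbitrary point symmetry of \eqref{noreqM} has $\xi$ free of $\by$ and the $\eta_j$ affine in $\by$ (this genuinely fails when $n=2$, which is precisely why the extra non-Cartan symmetries occur there) — since it rests on the somewhat lengthy examination of the terms of the determining equations that are nonlinear in the jet variables; one could instead transport the explicitly computable symmetry algebra of $\by^{(n)}=0$ through the point transformation \eqref{eqvnor}, at the price of first checking that \eqref{eqvnor} sends fields of the form $\xi\pd_x+\phi\sum_i y_i\pd_i$ to fields of the same shape, which is short. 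The coefficient extraction in the middle step is routine but should be carried out carefully so that no term of order $n-1$ is overlooked.
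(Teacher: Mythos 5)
Your reduction scheme is sound and, in the parts you actually carry out, correct: extracting the coefficient of $y_k^{(n-1)}$ (for $k\neq l$) on the solution manifold to force the off-diagonal $a_{lk}$ to be constant, reducing the diagonal part to a scalar symmetry problem for \eqref{noreq1}, and noting that a vertical scalar symmetry $b(x)\,y\,\pd_y$ of \eqref{noreq1} contributes $n\,b'\,y^{(n-1)}$ and hence forces $b'=0$, are all valid. These steps replace the paper's double-commutator computation $[H_{pq},[H_{ij},\bv]]$ and its bracket $[\bv,H_{jj}]=a_j\pd_j$ by a direct determining-equation argument, and your proof that no nonzero vertical field can lie in the complement (hence $\xi\neq 0$) is a legitimate alternative to the paper's observation that $\xi=0$ forces $[\bv,S_{kj}]=\phi\,S_{kj}$ with $\phi\,s_k$ a solution for every $k$.

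There is, however, a genuine gap at the step you yourself flag: the assertion that for $n\geq 3$ every point symmetry of \eqref{noreqM} has $\xi=\xi(x)$ and $\eta_j$ affine in $\by$. This is the real content of the lemma --- everything after it is bookkeeping --- and you neither prove it nor reduce it to a citable statement; ``routine, just as in the scalar case'' is exactly what must be justified for systems, since the claim fails at $n=2$ (where the non-Cartan symmetries \eqref{ncartansym} have $\xi$ depending on $\by$), so one must exhibit where $n\geq 3$ enters. The paper closes this gap without touching the nonlinear part of the determining equations: because symmetries permute solutions, $\mathfrak{g}_S$ is an ideal of $\mathfrak{g}$, so for $\bv=\xi(x,\by)\pd_x+\sum_i T_i(x,\by)\pd_i$ the bracket $[S_{kj},\bv]$, whose $\pd_x$-coefficient is $s_k\,\frac{\pd \xi}{\pd y_j}$ and whose $\pd_i$-coefficients involve $s_k\,\frac{\pd T_i}{\pd y_j}$, must again lie in $\mathfrak{g}_S$; since elements of $\mathfrak{g}_S$ have no $\pd_x$-component and coefficients depending on $x$ alone, this forces $\pd\xi/\pd y_j=0$ and $T_i$ affine in $\by$ --- precisely your starting normal form. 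Your alternative suggestion of transporting the symmetry algebra of $\by^{(n)}=0$ through \eqref{eqvnor} is legitimate in principle but merely relocates the same unproved computation to the canonical form. To complete the proof you should either carry out the nonlinear part of the determining-equation analysis for systems or adopt the bracket-with-$\mathfrak{g}_S$ argument.
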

\begin{proof}
Let $\bv= \xi(x, \by) \pd_x + \Scale[0.90]{ \sum_i} T_i(x, \by) \pd_i$ be  in $\mathfrak{g}_F.$  Then for each generator $S_{kj}$ in $\mathfrak{g}_S,$  one has
$[S_{kj}, \bv]= \xi_j \pd_x + \Scale[0.90]{ \sum_i} T_{ij} \pd_i,$ where
\begin{align} \label{fjTij}
\begin{split}
\xi_j & = s_k \frac{\pd \xi}{\pd y_j}, \qquad \text{for $j=1, \dots, \pzcm.$} \\
T_{ij}&= \begin{cases} s_k \frac{\pd T_i}{ \pd y_j} & \text{, for $i \neq j$} \\
                       s_{k-1} \left[ \xi[(1+ k -n) v u' - k u v'] + u v \frac{\pd T_j}{ \pd y_j}  \right] & \text{, otherwise.}    \end{cases}
\end{split}
\end{align}
Since the symmetry group of the equation transforms a solution into a solution, $\mathfrak{g}_S$ must be an ideal in $\mathfrak{g}.$ It thus follows from \eqref{fjTij} that $\xi=\xi(x)$ while $T_i$ must be linear in the $y_k,$ for $k=1, \dots, \pzcm.$ That is,
$
T_i= a_i+ \Scale[0.90]{ \sum_k} a_{ik}\,  y_k
$
for some functions $a_i= a_i(x)$ and $a_{ik}= a_{ik} (x).$ Consequently,
\begin{align}
\comment{[H_{ij}, \bv] & = (y_i a_{jj} - T_i) \pd_j + \Scale[0.90]{  \sum_{k \neq j}} y_i a_{kj} \pd_k,\quad \text{ for all $i,j$}  \label{[hijbv]1} \\
}
[H_{pq}, [H_{ij}, \bv]] &= y_p (\delta_{qi} a_{jj} - a_{iq} ) \pd_j - y_i a_{pj} \pd_q + \Scale[0.90]{  \sum_{k\neq j}} \delta_{qi} y_p a_{kj} \pd_k, \label{[hijbv]}\\
   & \equiv \bv_{pqij}  \notag
\end{align}
for all $i,j$ and $p,q$ with $p\neq j.$  In particular,
$$\bv_{piij} =  y_p (a_{jj} - a_{ii}) \pd_j - y_i a_{pj} \pd_i +\Scale[0.90]{  \sum_{k \neq j}} y_p a_{kj} \pd_k .$$
For $\bv_{piij}$ with $p\neq i$ to be a symmetry (vector), one sees that the scalars $a_{jj}- a_{ii}$ and $a_{kj}$ must be constants for all $i,j$ and for all $k\neq j.$ Substituting these values in the corresponding expressions for
$$\bv= \xi\pd_x+ \Scale[0.90]{  \sum_i} \big(\Scale[0.90]{   \sum_k} a_{ik}\, y_k + a_i \big)\pd_i$$
shows that   $\bv$ is a symmetry  only if these constants are all zeros, so that $\bv= \xi\pd_x + \Scale[0.84]{  \sum_i} (a_{11}y_i + a_i )\pd_i. $ Then $[\bv, H_{jj}]= a_j \pd_j, $ and this is a symmetry  if and only if $a_j$ is a solution to \eqref{noreq1}, and we should thus assume that $a_j=0$ for all $j=1, \dots, \pzcm.$ \comment{ as $\mathfrak{g}_F$ is the complement subspace of $\mathfrak{g}_S \dotplus \mathfrak{g}_H.$} This proves the first part of the lemma with $\phi= a_{11}.$ For $\xi=0,$ one has $[\bv, S_{kj}]= \phi\, S_{kj},$ so that $\phi\, s_k$ must be a solution of \eqref{noreq1} for all $k,$ and this forces each $\phi$ to be a constant and $\bv$ to lie in $\mathfrak{g}_H,$ which is an excluded possibility. So $\xi \neq 0$ and this completes the proof of the lemma.
\end{proof}

Denote by $\mathfrak{X}(\K)$ the Lie algebra of vector fields on $\K.$  Note also that the third order scalar equation with source equation \eqref{srce}, i.e. the equation obtained by transforming $y^{(3)}=0$ under \eqref{eqvnor} with the same function $f$ as that for \eqref{srce} is given by
\beqn \label{iter3}
y^{(3)} + 4 \fq y' + 2 \fq' y=0.
\eeqn
\begin{lem} \label{xi}
Any function $\xi=\xi(x)$  for which the vector $\bv= \xi \pd_x + \Scale[0.84]{  \sum_i} \phi y_i \pd_i$ is a generator of $\mathfrak{g}_F$  is  such that $\xi \pd_x$ is a generator of the subalgebra of $\mathfrak{X} (\K)$ isomorphic to  $\mathfrak{sl}_2.$ In particular, a maximum of three linearly independent such functions is given by $v^2, 2 u v,$ and $-u^2.$
\end{lem}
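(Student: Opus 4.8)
The plan is to transport the problem to the canonical system $\bw^{(n)}=0$ by the explicit linearizing point transformation recalled above, classify there the point symmetries of the relevant shape, and pull the answer back.

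First I would reduce to a scalar problem and then straighten. By Lemma \ref{gf} an element of $\mathfrak{g}_F$ has the form $\bv=\xi\pd_x+\phi\sum_i y_i\pd_i$ with $\xi=\xi(x)\ne 0$ and $\phi=\phi(x)$, and it is in particular a symmetry of \eqref{noreqM}; since \eqref{noreqM} is isotropic and $\bv$ acts diagonally on the $y_i$, this is equivalent to $\bv_0:=\xi\pd_x+\phi\,y\pd_y$ being a point symmetry of the scalar equation \eqref{noreq1}. Now \eqref{noreq1} is carried to $w^{(n)}=0$ by the point transformation with $z=\int dx/u^{2}$ and $w=u^{1-n}y$, where $u,v$ are the two linearly independent solutions of the source equation \eqref{srce} normalized by $\mathpzc{w}(u,v)=1$; because $\tfrac{d}{dx}(v/u)=1/u^{2}$ we may take $z=v/u$. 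Pushing $\bv_0$ forward through this transformation is a one-line computation and yields
\[
\tilde\xi(z)\,\pd_z+\mu(z)\,w\pd_w,\qquad \tilde\xi=\xi/u^{2},\qquad \mu=(1-n)\xi u'/u+\phi,
\]
with $\tilde\xi$ and $\mu$ functions of $z$ alone (the push-forward preserves homogeneous linearity in the dependent variable). So everything comes down to describing the point symmetries of $w^{(n)}=0$ of the form $\tilde\xi(z)\pd_z+\mu(z)w\pd_w$.

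Next I would carry out that classification. Writing the symmetry condition in evolutionary form, i.e.\ requiring $D_z^{\,n}\big(\mu w-\tilde\xi w'\big)$ to vanish on $w^{(n)}=0$ and its differential consequences $w^{(n+1)}=\dots=0$, and reading off the coefficients of $w^{(n-1)}$ and of $w^{(n-2)}$, gives $\mu'=\tfrac{n-1}{2}\tilde\xi''$ and, after eliminating $\mu$, a nonzero multiple (over a field of characteristic zero) of $\tilde\xi'''$ equal to zero, whence $\tilde\xi'''=0$; thus $\tilde\xi$ is a polynomial in $z$ of degree at most two. Conversely $\pd_z$, $z\pd_z$ and $z^{2}\pd_z+(n-1)zw\pd_w$ are point symmetries of $w^{(n)}=0$ realizing $\tilde\xi=1,z,z^{2}$ (this being the classical description, due to Lie, of the symmetry algebra of $w^{(n)}=0$ for $n\ge 3$). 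Hence the admissible $\tilde\xi$ fill the three-dimensional space $\langle 1,z,z^{2}\rangle$, and the associated vector fields on $\K$ are exactly $\langle \pd_z,\ z\pd_z,\ z^{2}\pd_z\rangle\cong\mathfrak{sl}_2$.

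Finally I would pull back. With $z=v/u$ and $\xi=u^{2}\tilde\xi(v/u)$, the admissible functions $\xi$ are precisely the elements of $\langle u^{2},uv,v^{2}\rangle$ — equivalently, transforming $\tilde\xi'''=0$ by the same point transformation with $n=3$, they are the solutions $u^2,uv,v^2$ of the third-order iterative equation \eqref{iter3} — and each of the three does occur, since pulling the above canonical symmetries back and passing to the isotropic diagonal form produces elements of $\mathfrak{g}$ not in $\mathfrak{g}_H\dotplus\mathfrak{g}_S$ with these $\xi$. Because the change of variable $x\mapsto z=v/u$ on the line is a local diffeomorphism sending $\langle\pd_z,z\pd_z,z^{2}\pd_z\rangle$ to $\langle u^{2}\pd_x,uv\pd_x,v^{2}\pd_x\rangle$, each such $\xi\pd_x$ generates this $\mathfrak{sl}_2$-subalgebra of $\mathfrak{X}(\K)$; rescaling to a standard triple, one checks directly from $uv'-u'v=1$ that $E=v^{2}\pd_x$, $H=2uv\,\pd_x$, $F=-u^{2}\pd_x$ satisfy $[H,E]=2E$, $[H,F]=-2F$, $[E,F]=H$, which exhibits $\{v^{2},2uv,-u^{2}\}$ as the claimed maximal set of three independent functions $\xi$. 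The step that genuinely needs care is the symmetry classification of $w^{(n)}=0$, in particular ruling out $\tilde\xi$ of degree $\ge3$; that is where the order enters, the second-order case being precisely the one where the conclusion fails and $\mathfrak{g}_F$ acquires the extra non-Cartan symmetries (for which $\xi$ even involves the dependent variables), although the restriction on the shape of $\bv$ has here already been supplied by Lemma \ref{gf}.
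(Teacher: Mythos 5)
Your proof is correct, but it takes a genuinely different route from the paper's. The paper never changes coordinates: it observes that $\rho(\xi\pd_x+\sum_i(\phi y_i+s^i)\pd_i)=\xi\pd_x$ is a Lie algebra homomorphism $\mathfrak{g}\to\mathfrak{X}(\K)$ with kernel $\mathfrak{g}_H\dotplus\mathfrak{g}_S$, invokes Lie's classical theorem that every finite-dimensional subalgebra of $\mathfrak{X}(\K)$ is isomorphic to a subalgebra of $\mathfrak{sl}_2$, and bounds the dimension by $3$ through a Wronskian argument (the $\xi_i$ span the solution space of the $d$th order equation $\mathpzc{w}(y,\xi_1,\dots,\xi_d)/\mathpzc{w}(\xi_1,\dots,\xi_d)=0$, which must also contain all $\mathpzc{w}(\xi_i,\xi_j)$), finally identifying the span $\langle v^2,2uv,-u^2\rangle$ as the solution space of the iterative equation \eqref{iter3}. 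You instead straighten \eqref{noreq1} to $w^{(n)}=0$ via $z=v/u$, $w=u^{1-n}y$, derive the determining equations $\mu'=\tfrac{n-1}{2}\tilde\xi''$ and $\tfrac{n(n-1)(n+1)}{12}\tilde\xi'''=0$ directly from the coefficients of $w^{(n-1)}$ and $w^{(n-2)}$, and pull $\langle 1,z,z^2\rangle$ back to $\langle u^2,uv,v^2\rangle$. Your computations check out (including the $\mathfrak{sl}_2$ relations from $uv'-u'v=1$, which the paper leaves to the reader), and your argument is more self-contained since it does not rely on Lie's classification of subalgebras of $\mathfrak{X}(\K)$; the cost is that you must exhibit the explicit linearizing transformation and verify the converse inclusion by producing the canonical symmetries $\pd_z$, $z\pd_z$, $z^2\pd_z+(n-1)zw\pd_w$, whereas the paper's quotient-plus-Wronskian argument is shorter and works without ever leaving the original coordinates. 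One cosmetic point: your closing remark that "the conclusion fails" for $n=2$ is slightly misleading — for vector fields of the restricted shape fixed by Lemma \ref{gf} the computation and conclusion survive at $n=2$ (as the paper itself notes after Theorem \ref{struct}); what changes at second order is only that additional non-Cartan symmetries with $\xi$ depending on $\by$ appear outside this shape, which your own parenthetical already concedes.
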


\begin{proof}
 It clearly follows from the form of elements in $\mathfrak{g}_H$ and  in $\mathfrak{g}_S,$ as well as from  the form of elements in $\mathfrak{g}_F$ as specified by Lemma \ref{gf} that the kernel of the Lie algebra homomorphism  $\rho \colon \mathfrak{g} \ra \mathfrak{X} (\K)$ given by
$\rho [\xi \pd_x + \Scale[0.84]{ \sum_i} (\phi y_i + s^i)  \pd_i ] = \xi \pd_x$ is precisely the subalgebra $\mathfrak{g}_H \dotplus \mathfrak{g}_S$ of $\mathfrak{g}.$ It is well-known from a result of Lie \cite{lieTransf,KM}, that all finite dimensional subalgebras of $\mathfrak{X} (\K)$ are isomorphic to $\mathfrak{sl}_2$ or to one of its subalgebras. If we let $\set{\xi_i (x) \pd_x\colon 1 \leq i \leq d} $ be a basis of  such a subalgebra of $\mathfrak{X}(\K),$ then $d \leq 3.$  Indeed, for $d>1$ the functions $\xi_i$ form a basis of the solution space of the $d$th order {\small \sc ode}
\beqn \label{wronskeq}
\frac{1}{\mathpzc{w}(\xi_1, \dots, \xi_d)} \, \mathpzc{w}(y, \xi_1, \dots, \xi_d)=0.
\eeqn
Since all Wronskians $\mathpzc{w} (\xi_i, \xi_j)$ should also be solutions to this equation, it follows that $d \leq 3$  as claimed. For arbitrary linearly independent  functions $\alpha$ and $\beta,$ the Lie algebra generated by $\alpha^2 \pd_x,\, -\beta^2 \pd x$ and $2 \alpha \beta \pd_x$ is clearly isomorphic to $\mathfrak{sl}_2.$ In particular if we let $u$ and $v$ be the usual solutions of the source equation \eqref{srce}, then for $\xi_p= v^2,$  $\xi_m= -u^2,$ and $\xi_z= 2 u v$ the corresponding equations \eqref{wronskeq} reduces exactly to \eqref{iter3}. That is, the linearly independent functions $\xi$ are a subset of the basis of the solution space of the third order scalar equation \eqref{iter3}.
\end{proof}

This proof is similar to that  given in \cite{KM} for establishing a similar result in the case of scalar equations, but it is stated and proved here quite differently. Although we know how to find the linearly independent values of $\xi$ in \eqref{gengf}, that alone does not tell how to determine the function $\phi,$ and one easy way to completely determine the form of a generator $\bv$ in $\mathfrak{g}_F$ is to make use of the infinitesimal criterion of invariance for a symmetry generator \cite{olver1}.  If we denote by $\bv^{(n)}$ the $n$th prolongation of a vector field $\bv$ and by $\Delta \equiv (\Delta_1, \dots, \Delta_\pzcm)=0$ a system of  differential equations, the infinitesimal invariance condition is given by $$\bv^{(n)} (\Delta_\nu)\big \vert_{(\Delta=0)} =0, \text{ for all $\nu=1, \dots, \pzcm.$}$$
Writing this condition and expanding the resulting expression as a polynomial in the derivatives of the dependent variables yields the determining equations for $\bv.$ In the actual case of the vector $\bv$ in \eqref{gengf} which is almost already completely determined, the determining equations are fairly simple and the relevant ones reduce to
\begin{subequations} \label{detv}
\begin{align}
\phi'- \frac{n-1}{2}\xi'' &=0   \label{detphi}\\
\xi^{(3)} + 4 \fq \xi' + 2 \fq' \xi&=0 \label{detxi}
\end{align}
\end{subequations}
We note that condition \eqref{detxi} above is just another restatement of part of Lemma \ref{xi}. The resulting expression for $\bv$ in \eqref{gengf} takes the form
\beqn \label{exprV}
\bv = \xi \pd_x +  \Scale[0.90]{ \sum_{i=1}^\pzcm} \left(\frac{n-1}{2} \xi' + k_1 \right) y_i \pd_i,
\eeqn
where $\xi$ is a solution  of the third order equation \eqref{iter3} and where $k_1$ is an arbitrary constant of integration which by the nature of $\mathfrak{g}_F$ should be omitted. It turns out that $\mathfrak{g}_F$ has the maximal dimension $3.$ Indeed, let
\begin{align}  \label{Fpmz}
\begin{split}
F_p &=  v^2 \pd_x + (n-1) v v'\, \Scale[0.90]{ \sum_{i=1}^\pzcm} y_i \pd_i \\
F_m &=  -u^2 \pd_x - (n-1) uu'\, \Scale[0.90]{ \sum_{i=1}^\pzcm} y_i \pd_i \\
F_z &=  2 u v \pd_x + (n-1) (u v' + u' v )\, \Scale[0.90]{ \sum_{i=1}^\pzcm} y_i \pd_i,
\end{split}
\end{align}
then we have the following result.
\begin{thm} \label{struct}
\begin{enumerate}\mbox{}
\item[\rm{(a)}] The complement subspace $\mathfrak{g}_F$ of the subalgebra $\mathfrak{g}_H \dotplus \mathfrak{g}_S$ of the Lie point symmetry algebra $\mathfrak{g}$ of \eqref{noreqM} is a three-dimensional subalgebra isomorphic to $\mathfrak{sl}_2$ and having generators $F_p, F_m,$ and $F_z.$

\item[\rm{(b)}] The Lie algebra $\mathfrak{g}$ has dimension $\pzcm^2 + n \pzcm + 3,\quad (n\geq 3),$ and Levi decomposition
              $$ \mathfrak{g} = (\mathfrak{g}_F \oplus \mathfrak{s}_H)\ltimes (\mathfrak{g}_S \oplus \K H_0),  $$
where the symbol $\ltimes$ represents the semidirect sum of Lie algebras, and $(\mathfrak{g}_F\oplus \mathfrak{s}_H)$ the Levi factor.
\end{enumerate}
\end{thm}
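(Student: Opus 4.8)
The plan is to deduce Theorem~\ref{struct} from Lemmas~\ref{hij-hpq}, \ref{gf} and \ref{xi}, proving part~(a) first and then feeding it into the structural statement of part~(b). For part~(a): by Lemma~\ref{gf} every $\bv\in\mathfrak{g}_F$ has the form \eqref{gengf}, and the determining equations \eqref{detv} pin it down to \eqref{exprV}; discarding the integration constant $k_1$ (it only contributes $k_1H_0\in\mathfrak{g}_H$, which is not in the complement), an element of $\mathfrak{g}_F$ is exactly $\bv=\xi\pd_x+\tfrac{n-1}{2}\xi'\sum_iy_i\pd_i$ with $\xi$ a solution of the third-order equation \eqref{iter3}. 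The solution space of \eqref{iter3} is three-dimensional with basis $\{v^2,2uv,-u^2\}$ (Lemma~\ref{xi}), and substituting these into the displayed form yields precisely $F_p$, $F_z$, $F_m$ of \eqref{Fpmz}; since they satisfy \eqref{detv} they lie in $\mathfrak{g}$, and being linearly independent modulo $\mathfrak{g}_H\dotplus\mathfrak{g}_S$ they form a basis of $\mathfrak{g}_F$, so $\dim\mathfrak{g}_F=3$.

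To identify the Lie algebra structure on $\mathfrak{g}_F$, write $Y=\sum_iy_i\pd_i$ and use the elementary brackets $[\pd_x,Y]=0$, $[Y,g(x)Y]=0$, $[g(x)\pd_x,h(x)Y]=gh'\,Y$ to obtain, for $\bv_a=\xi_a\pd_x+\tfrac{n-1}{2}\xi_a'Y$,
\[
[\bv_1,\bv_2]=w\,\pd_x+\tfrac{n-1}{2}\,w'\,Y,\qquad w:=\xi_1\xi_2'-\xi_2\xi_1',
\]
the Wronskian of $\xi_1$ and $\xi_2$. This $w$ is again a solution of \eqref{iter3}: directly, $\mathpzc{w}(u^2,v^2)=2uv$, $\mathpzc{w}(u^2,uv)=u^2$, $\mathpzc{w}(uv,v^2)=v^2$ (using the normalization $\mathpzc{w}(u,v)=1$), or, as noted in the proof of Lemma~\ref{xi}, because Wronskians of a basis of an $\mathfrak{sl}_2$-subalgebra of $\mathfrak{X}(\K)$ remain in that subalgebra. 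Hence $[\mathfrak{g}_F,\mathfrak{g}_F]\subseteq\mathfrak{g}_F$, and evaluation on the basis gives $[F_p,F_m]=F_z$, $[F_z,F_p]=2F_p$, $[F_z,F_m]=-2F_m$, the standard relations of $\mathfrak{sl}_2$; equivalently, $\rho$ from Lemma~\ref{xi} restricts to an isomorphism from $\mathfrak{g}_F$ onto the $\mathfrak{sl}_2$-subalgebra of $\mathfrak{X}(\K)$ identified there. This proves part~(a).

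For part~(b), the dimension is immediate: $\mathfrak{g}_F$ is by construction a vector-space complement of the subalgebra $\mathfrak{g}_H\dotplus\mathfrak{g}_S$ in $\mathfrak{g}$, so Lemma~\ref{hij-hpq}(b) and part~(a) give $\dim\mathfrak{g}=(\pzcm^2+n\pzcm)+3$. For the Levi decomposition I would check three facts. First, $\mathfrak{g}_F\oplus\mathfrak{s}_H$ is a semisimple subalgebra: a direct computation shows each of $\xi\pd_x$ and $\phi(x)Y$ commutes with every $H_{ij}=y_i\pd_j$, so $[\mathfrak{g}_F,\mathfrak{g}_H]=0$; combined with $\mathfrak{g}_F\cong\mathfrak{sl}_2$ and $\mathfrak{s}_H\cong\mathfrak{sl}_\pzcm$ (Lemma~\ref{hij-hpq}(a)) this exhibits $\mathfrak{g}_F\oplus\mathfrak{s}_H$ as a direct sum of semisimple ideals, a semisimple subalgebra of dimension $\pzcm^2+2$. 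Second, $\mathfrak{r}:=\mathfrak{g}_S\oplus\K H_0$ is a solvable ideal of $\mathfrak{g}$: it is a subalgebra since $\mathfrak{g}_S$ is abelian (Lemma~\ref{hij-hpq}(b)) and $[H_0,S_{kj}]=-S_{kj}$ (from $[S_{kp},H_{ij}]=\delta_{pi}S_{kj}$); it is solvable since $[\mathfrak{r},\mathfrak{r}]=\mathfrak{g}_S$ is abelian; and it is an ideal because $\mathfrak{g}_S$ is already an ideal of $\mathfrak{g}$ (used in the proof of Lemma~\ref{gf}: the symmetry group sends solutions to solutions and $\mathfrak{g}_S$ consists of the translations by solutions, hence is $\Ad$-invariant), while $[\mathfrak{g}_F,H_0]=0$, $[\mathfrak{s}_H,H_0]=0$ ($H_0$ central in $\mathfrak{g}_H$), and $[\mathfrak{g}_S,H_0]\subseteq\mathfrak{g}_S$. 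Third, since the dimensions satisfy $(\pzcm^2+2)+(n\pzcm+1)=\pzcm^2+n\pzcm+3$ we get $\mathfrak{g}=(\mathfrak{g}_F\oplus\mathfrak{s}_H)\dotplus\mathfrak{r}$ as vector spaces; $\mathfrak{r}$ being a solvable ideal forces $\mathfrak{r}\subseteq\Rad\mathfrak{g}$, while $\mathfrak{g}/\mathfrak{r}\cong\mathfrak{g}_F\oplus\mathfrak{s}_H$ being semisimple forces $\Rad\mathfrak{g}\subseteq\mathfrak{r}$; hence $\mathfrak{r}=\Rad\mathfrak{g}$, the semisimple subalgebra $\mathfrak{g}_F\oplus\mathfrak{s}_H$ is a Levi factor, and $\mathfrak{g}=(\mathfrak{g}_F\oplus\mathfrak{s}_H)\ltimes(\mathfrak{g}_S\oplus\K H_0)$.

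The only genuinely computational step is the bracket on $\mathfrak{g}_F$, which rests on the Wronskian identities for $u$ and $v$; everything else is a short vector-field bracket or standard structure theory. The point that most needs care is that $\mathfrak{r}$ is an ideal of all of $\mathfrak{g}$ — concretely, $[\mathfrak{g}_F,\mathfrak{g}_S]\subseteq\mathfrak{g}_S$ — but this is the ``symmetries preserve solutions'' fact already invoked for Lemma~\ref{gf}; should a self-contained argument be wanted, one checks directly that $\xi s_k'-\tfrac{n-1}{2}\xi's_k$ solves \eqref{noreq1} whenever $\xi$ solves \eqref{iter3}.
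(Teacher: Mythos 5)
Your proposal is correct and follows essentially the same route as the paper: derive the form \eqref{exprV} of $\mathfrak{g}_F$-generators from Lemmas \ref{gf} and \ref{xi} and the determining equations, verify the three fields \eqref{Fpmz} are symmetries satisfying the $\mathfrak{sl}_2$ relations, and assemble the Levi decomposition from $[\mathfrak{g}_H,\mathfrak{g}_F]=0$ together with the facts that $\mathfrak{g}_S$ is an abelian ideal, $\K H_0$ is central in $\mathfrak{g}_H$, and $\mathfrak{g}_F\oplus\mathfrak{s}_H$ is semisimple. You merely supply more detail than the paper does, notably the Wronskian identity $[\bv_1,\bv_2]=w\,\pd_x+\tfrac{n-1}{2}w'\sum_i y_i\pd_i$ for the $\mathfrak{sl}_2$ brackets and the explicit radical argument, both of which check out.
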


\begin{proof}
By construction, linearly independent generators of $\mathfrak{g}_F$ should be of the form \eqref{Fpmz}, and it is easily verified using the infinitesimal criterion of invariance that the three vector fields $F_p, F_m,$ and $F_z$ are indeed symmetries of the  system of equations \eqref{noreqM}. Moreover, they satisfy the $\mathfrak{sl}_2$ commutation relations $[F_p, F_m]= F_z,\; [F_z, F_p]= 2 F_p$ and $[F_z, F_m]= -2 F_m,$ and this proves part (a). For part (b), this follows from the fact that $[\mathfrak{g}_H, \mathfrak{g}_F]=0$ and the already known facts that $\mathfrak{g}_S$ is an abelian ideal in $\mathfrak{g},$ while $\K H_0$ is the center of $\mathfrak{g}_H$ and $\mathfrak{s}_H$ is also semisimple. This completes the proof of the theorem.
\end{proof}

It is also interesting to note that the action of $\mathfrak{g}_F$ on $\mathfrak{g}_S$ corresponds to a semisimple representation which is irreducible on each of the $n$-dimensional subalgebras $\mathfrak{g}_{Sj}$ of $\mathfrak{g}_S$ generated by the $S_{kj},$ for $j$ fixed. This follows from the fact that the commutation relations between $\mathfrak{g}_F$ and $\mathfrak{g}_S$ are given by
\begin{align*}
[F_p, S_{kj}] &= (1+k-n) S_{k+1,\, j}, \quad [F_m, S_{kj}]= k S_{k-1,\, j}, \\
[F_z, S_{kj}] &= (1+2k-n) S_{kj},
\end{align*}
for all $j=1, \dots, \pzcm$ and for all $k=0, \dots, n-1.$ For convenience the complete table of commutation relations of $\mathfrak{g}$ is given in Table \ref{t:1}. \par

\begin{table}[h]
 \caption{ \label{t:1} \protect \footnotesize Commutation relations for the symmetry algebra $\mathfrak{g}$ of \eqref{noreqM} }
\begin{tabular}{r | c c c c c }

              & $H_{ij}$                                 & $S_{ha}$                 & $F_p$         & $F_m$                & $F_z$       \\[1.0pt] \hline \\[2.2pt]

  $H_{pq}$  & $\delta_{iq} H_{pj} - \delta_{pj} H_{iq} $ &  $-\delta_{ap} \,S_{hq}$ & $0$           & $0$                  & 0      \\[2.2pt]

  $S_{kb}$  & $\delta_{bi} S_{kj}$                        &  $0$                   & $-$             & $-$                 & $-$      \\[2.2pt]

   $F_p$    & $0$                                         &  $(1+h-n)S_{h+1,a}$    & $0$              & $F_z$           & $- 2 F_p$       \\[2.2pt]
  $F_m$     & $0$                                         &  $ hS_{h-1, a}$        & $-F_z$        & $0$           & $2 F_m$       \\[2.2pt]

  $F_z$     & $0$                                        &  $(1+2h-n)S_{h,a}$      & $2 F_p$        & $-2F_m$           & $0$       \\ \hline

\end{tabular}

\parbox[t]{0.9\textwidth}{ \vspace{3mm} \protect \scriptsize   The  basis vectors in the top row and leftmost column are not the same, as $H_{ij}$ and $S_{ha}$ in the row vectors are replaced with $H_{pq}$ and $S_{kb}$ in the column vectors, respectively. Thus the table is not skew symmetric. The commutation relations $[S_{kb}, F_i],$ where $F_i$ is a basis element of $\mathfrak{g}_F$ is represented in the table with a dash as they can be obtained from those of the form $[F_i, S_{ha}]$ given in the table.
}
\end{table}

Although Theorem \ref{struct} has been stated only for systems of order $n \geq 3,$ according to a result of \cite{lopez-slde}, the maximal dimension of the symmetry algebra for second-order systems of linear equations is $d_{\pzcm , 2} = \pzcm^2 + 4 \pzcm + 3.$ For $n=2$ this can be written as  $~(\pzcm^2 + n \pzcm + 3) + n \pzcm .$ In other words, the expression for the maximal dimension $d_{\pzcm , 2}$ is that for systems of order $n \geq 3$ plus the additional term $n \pzcm.$ Moreover it is easy to verify that if we let $n=2$ in Theorem \ref{struct}, the Lie algebra $\mathfrak{g}$ is still defined and its corresponding $\pzcm^2 + n \pzcm + 3$ generators are symmetries of the associated second order equation. The additional $2 \pzcm$ symmetries $C_{ik}$ of the second-order equation are a generalization the so-called non-Cartan symmetries known for second-order scalar linear equations \cite{moyoL, charalambous}. If we denote by $u_1= u$ and $u_2= v$ the two linearly independent solutions of the source equation \eqref{srce}, then one can verify that these non-Cartan symmetries are given for a  second-order equation in normal form \eqref{noreqM} by
\begin{equation} \label{ncartansym}
C_{ik} = y_i u_k \pd_x + \sum_{j=1}^\pzcm y_i y_j u_k' \pd_j,\quad \text{for  $i= 1, \dots, \pzcm$ and $k=1,2,$}
\end{equation}
where  $u_k' = d u_k/ dx.$  These $2 \pzcm$ non-Cartan symmetries form and abelian algebra, but together with the other $\pzcm^2 + 2 \pzcm + 3$ symmetry generators of  the corresponding Lie algebra  $\mathfrak{g}$ of \eqref{noreqM} (with $n=2$), they form as already noted the semisimple Lie algebra $\mathfrak{sl}_{\mathpzc{m}+2}.$\par
We also note that on the basis of the upper bound obtained in  \cite{gonzal-newres} for systems of {\small \sc ode}s of the general form \eqref{gen-sde}, the results obtained for all orders $n \geq 3$ in  Theorem \ref{struct} of this paper  have the following important consequence.
\begin{cor}\label{maxdimodes}
For $n \geq 3,$ the maximal dimension of the Lie point symmetry algebra for systems of {\small \sc ode}s of the general for \eqref{gen-sde} is $\pzcm^2 + n \pzcm + 3,$ and this maximal dimension is achieved for all equations in canonical classes of linear systems of {\small \sc ode}s.
\end{cor}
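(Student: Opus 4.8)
The plan is to combine the exact dimension count from Theorem \ref{struct}(b) with the upper bound of Gonz\'alez-Gasc\'on and Gonz\'alez-L\'opez \cite{gonzal-newres}, so that the two meet and pin down the maximal dimension exactly. First I would recall that \cite{gonzal-newres} established that for any system of the general form \eqref{gen-sde} with $n \geq 3$, the dimension of the Lie point symmetry algebra cannot exceed $\pzcm^2 + n\pzcm + 3$; this gives the "$\leq$" direction and requires nothing new. Next I would invoke Theorem \ref{struct}(b): for the normal form \eqref{noreqM} of any element of a canonical class of linear systems, the symmetry algebra $\mathfrak{g}$ has dimension exactly $\pzcm^2 + n\pzcm + 3$. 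This already shows the bound $\pzcm^2 + n\pzcm + 3$ is attained.

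The one point needing care is that Theorem \ref{struct} was stated for the \emph{normal form} \eqref{noreqM}, whereas the corollary speaks of \emph{all} equations in a canonical class. Here I would appeal to the opening remarks of Section \ref{s:liesym}: under a diffeomorphism $\sigma$ of $\mcm$ one has $\sigma_*[\bv,\bw] = [\sigma_*\bv, \sigma_*\bw]$, so the symmetry algebras of point-equivalent systems are isomorphic — in particular they have the same dimension. Since by \eqref{eqvnor} every element of a canonical class of linear systems is point-equivalent to some normal form \eqref{noreqM} (and ultimately to $\by^{(n)}=0$), its symmetry algebra also has dimension $\pzcm^2 + n\pzcm + 3$. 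Combining this with the upper bound yields that $\pzcm^2 + n\pzcm + 3$ is the maximal dimension, and that it is realized precisely on the canonical classes of linear systems, as asserted.

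I do not expect any genuine obstacle here: the corollary is essentially a bookkeeping consequence, since the substantive work — identifying the full structure of $\mathfrak{g}$ and hence its dimension — has already been done in Lemmas \ref{hij-hpq}--\ref{xi} and Theorem \ref{struct}, and the matching upper bound is quoted from \cite{gonzal-newres}. The only thing to be slightly careful about is not to overclaim: the statement says the maximal dimension is \emph{achieved} on canonical classes of linear systems, not that it is achieved \emph{only} there, so no uniqueness argument is needed (indeed, by Fels' result \cite{fels-bk, bk014} for $n \leq 3$ maximal symmetry characterizes the canonical class, but for $n \geq 4$ no such converse is claimed). Thus the proof is: quote the upper bound, quote Theorem \ref{struct}(b), and use push-forward invariance of the bracket to pass from the normal form to the whole canonical class.
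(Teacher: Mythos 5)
Your proposal is correct and follows essentially the same route as the paper, which presents the corollary as an immediate consequence of the upper bound quoted from \cite{gonzal-newres} together with the exact dimension count of Theorem \ref{struct}(b), with the passage from the normal form to the whole canonical class handled by the push-forward invariance of the Lie bracket noted at the start of Section \ref{s:liesym}. No gaps.
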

In fact, it was also wrongly pointed out in \cite{gonzal-newres} that the  upper bound $\pzcm^2 + n \pzcm + 3$ proposed in that paper is achieved only for $\pzcm=1,$ i.e. only for scalar equations.  On the other hand as already noted, the maximal dimension of the symmetry algebra for \eqref{gen-sde} is well-known for $n \leq 3$  \cite{fels-bk}.

It should be noted that the class of canonical forms having as symmetry algebra up to isomorphism the Lie algebra $\mathfrak{g}$  determined in this section is quite general. The expression of a general element from this class can be obtained for a given order of the equation by applying the most general point transformation to the associated canonical form. For second-order equations in dimension 2, if we let
\begin{equation*}
x= \theta(t, w_1, w_2),\quad Y= \psi(t, w_1, w_2),\quad   Z= \omega(t, w_1, w_2)
\end{equation*}
be such an invertible point transformation, then the expression of an arbitrary element in the corresponding class of equations takes the form
\begin{align*}
\begin{split}
 &-\left(\psi _x+Y_x \psi _Y+Z_x \psi _Z\right) \big(\theta _Y Y_{x,x}+\theta _Z Z_{x,x}+\theta _{x,x}+2 Y_x \theta _{x,Y}+Y_x^2 \theta _{Y,Y}\\
& +2 Z_x \left(\theta _{x,Z}+Y_x \theta _{Y,Z}\right)+Z_x^2 \theta _{Z,Z}\big)+\left(\theta _x+Y_x \theta _Y+Z_x \theta _Z\right) \big(\psi _Y Y_{x,x} +\psi_Z Z_{x,x}\\
 &+\psi _{x,x}+2 Y_x \psi _{x,Y}+Y_x^2 \psi _{Y,Y}+2 Z_x \left(\psi _{x,Z}+Y_x \psi _{Y,Z}\right)+Z_x^2 \psi _{Z,Z}\big)=0
\end{split}\\[4mm]
 \begin{split}
&-\left(\omega _x+Y_x \omega _Y+Z_x \omega _Z\right) \big(\theta _Y Y_{x,x}+\theta _Z Z_{x,x}+\theta _{x,x}+2 Y_x \theta _{x,Y}+Y_x^2 \theta _{Y,Y} \\
&+2 Z_x \left(\theta _{x,Z}+Y_x \theta _{Y,Z}\right)+Z_x^2 \theta _{Z,Z}\big)+\left(\theta _x+Y_x \theta _Y+Z_x \theta _Z\right) \big(\omega _Y Y_{x,x} +\omega
_Z Z_{x,x}\\
&+\omega _{x,x}+2 Y_x \omega _{x,Y}+Y_x^2 \omega _{Y,Y}+2 Z_x \left(\omega _{x,Z}+Y_x \omega _{Y,Z}\right)+Z_x^2 \omega _{Z,Z}\big)=0.
\end{split}
\end{align*}
Although this system, which is the simplest for an \emph{arbitrary element} in the class of vector {\small \sc ode}s, has quite a complicated expression we note that it is a differential polynomial of degree $3$ in the dependent variables $Y$ and $Z.$ This is a useful criteria for identifying them. A more complete identification of families of {\small \sc ode}s and the determination of their canonical forms can be directly achieved through the theory of invariants of differential equations \cite{invode1,invode2}, and the Cartan moving frame methods \cite{Pohjanpelto,OlverMF1}. Nevertheless, because of the intricate calculations usually involved in these methods,  Lie  symmetry methods remain as shown in this section some of the most efficient methods for identifying complex systems of equations as well as  for finding their canonical forms.
\section{The variational symmetry algebra}
\label{s:transfo}
Among the symmetries of a given system $\Delta \equiv(\Delta_1, \dots, \Delta_\pzcm)=0$ of differential equations, some of them so-called variational symmetries, and their slightly more general versions called divergence symmetries \cite{olver1} play a particularly important role as they determine amongst others the conservation laws, and first integrals thereof associated with the system. Variational symmetries may however exist only for certain even-order systems of equations. In order to clarify these two concepts we introduce some few new terms, by restricting however as much as possible our discussion to the system of  {\small \sc ode}s of the form \eqref{noreqM} that we are considering.\par

Denote by $f[\by]= f(x, \by^{(m)})$ a differential function depending on $x,\, \by= \by(x),$ and the derivatives of $\by$ up to an arbitrary but fixed order $m.$ Given an open and connected subset $\Omega \subset \K,$ with smooth boundary $\partial \Omega,$ the problem of finding the extremals of the functional
\begin{equation}\label{varprob}
\vartheta[\by] = \int_{\Omega} L(x, \by^{(m)}) dx
\end{equation}
is called a variational problem and the integrand $L= L(x, \by^{(m)})$ is referred to as the Lagrangian. Every smooth extremal $\by= \by(x)$ of \eqref{varprob} must be a solution of the associated Euler-Lagrange equations
\begin{subequations}\label{EL}
\begin{align}
E_j (L)&=0, \qquad j= 1, \dots, \pzcm \label{EL1}\\
\intertext{where in the actual case of systems of {\small \sc ode}s}
E_j &= \sum_k  (-1)^k D_x^k  \frac{\partial}{\partial y_j^{(k)}} \label{EL2}
\end{align}
\end{subequations}
is the $j$th Euler operator, $D_x$ is the total differential operator with respect to the independent variable $x$, while $y_j^{(k)}= D_x^k \,y_j =  d^k y_j / d x^k . $ The system of Euler-Lagrange equations \eqref{EL1} in which  the differential operators $E_j$  are given by \eqref{EL2} is of even order $n=2m,$ provided that $L$ is not linear in the variables $y_j^{(m)}.$ A differential equation $\Delta=0$ is called Lagrangian with respect to a Lagrangian $L(x, \by^{(m)})$ if it is {\em equivalent} to the Euler-Lagrange equation \eqref{EL1}, where  equivalent here means that the solutions set of the two equations are the same.\par

Suppose that $\Delta=0$ is actually an $n$th order system of {\small \sc ode}s and denote by
\begin{equation}\label{vDelta}
\mathbf{v} = \xi(x,\by) \frac{\partial }{\partial x} + \sum_{j=1}^\pzcm \psi_j (x,\by) \frac{\partial }{\partial y_j}
\end{equation}
an infinitesimal generator of the symmetry group of $\Delta=0,$ acting on an open set $M \subset \Omega \times \K^\pzcm,$  and set
\beqn \label{s(v)}
\mathscr{S} (\bv) = \bv^{(n)} (L) + L\, D_x\, \xi.
\eeqn
 For the equation $\Delta=0,$ or equivalently for the variational problem $\vartheta[\by] = \int_{\Omega} L(x, \by^{(m)}) dx,$  $\bv$ is a variational symmetry  if and only if
\begin{subequations} \label{varcdt}
\begin{align}
\mathscr{S} (\bv) &=0 \label{varsym}\\
\intertext{and a divergence symmetry if and only if}
\mathscr{D} (\bv) &\equiv E[\mathscr{S} (\bv)] =0 \label{divsym}
\end{align}
\end{subequations}
for all $(x, \by^{(n)})$ in the extended jet space $M^{(n)}$ of $M,$  where $E=(E_1, \dots, E_\pzcm)$ is the Euler operator. Condition \eqref{varsym} is just the well known infinitesimal variational symmetry criterion \cite{olver1} while the corresponding infinitesimal condition for divergence symmetries is $\mathscr{S} (\bv) = D_x B$ for a certain functional $B= B[\by].$ Thus condition \ref{divsym} follows from the fact that $B$ is a total divergence if and only if it is a null Lagrangian, which means that the associated Euler-Lagrange equations vanish identically.\par

The space of all variational symmetries of a given Lagrangian system forms a Lie algebra, and the same holds true for the space of divergence symmetries of any given system of equations. Once a basis $\set{\bv_i}$  of the Lie point symmetry algebra has been found for a given  system of equations, to find a basis of the Lie algebra of variational symmetries or the Lie algebra of divergence symmetries, one can test the corresponding infinitesimal condition \eqref{varcdt} on a general linear combination  $\bv$ of the $\bv_i.$ In performing such a test, one may exclude those basis vectors which already satisfy individually the test, owing to the linearity of the operators $\mathscr{S}$ and $\mathscr{D}.$ \par

Having found in the previous section the structure of the Lie algebra $\mathfrak{g}$ of symmetries for the class \eqref{noreqM} of normal forms for systems of linear equations, we are interested in finding here the structures of the Lie algebra $\mcs_{var}$ of variational symmetries and $\mcs_{div}$ of divergence symmetries for the same class of equations \eqref{noreqM} of \emph{even} orders. First of all one can check by direct calculation that any such Linear system is Lagrangian, as it satisfies the required necessary and sufficient condition of having a self-adjoint Fr\'echet derivative. The Lagrangian function we will choose is the standard one for linear systems $\Delta (x, \by^{(n)})=0$  (of even orders) which is given by $L_0= \frac{1}{2} \by \cdot \Delta.$ As is well known, such a Lagrangian can always be replaced by an equivalent one of order $n/2,$  and equivalent Lagrangian functions yield the same Euler-Lagrange equations.  \par

We also note that if two differential equations $\Delta_A=0$ and $\Delta_B=0$ are equivalent under an invertible point transformation $\sigma$ and if $\Delta_A=0$ is Lagrangian with respect to a Lagrangian $L_A,$ then $\Delta_B=0$ is also Lagrangian with respect to a certain Lagrangian function $L_B$ which has a simple expression in terms of $\sigma$ \cite{lopez-slde,sarlet22}. Moreover, if $\bv_b$ is a variational symmetry for $\Delta_B=0$ relatively to the Lagrangian $L_B,$ then $\bv_a= \sigma_* \bv_b$ is  also a variational symmetry of $\Delta_A=0$ with respect to $L_A.$ In other words, if  two equivalent differential equations  are Lagrangian with respect to two given Lagrangian functions, then their variational symmetry algebras with respect to these two Lagrangian functions are isomorphic.  A similar result applies to the divergence symmetries of two equivalent equations $\Delta_A=0$ and $\Delta_B=0$. Therefore the structure we shall find explicitly for the Lie algebras $\mcs_{var}$ and $\mcs_{div}$ corresponding to the class of linear systems \eqref{noreqM} and the Lagrangian function $L_0$ is actually valid for the more general class of systems of all {\small \sc ode}s  equivalent by a local diffeomorphism $\sigma$ to a canonical form, and with respect to a certain Lagrangian whose expression is known in terms of $L_0$ and $\sigma.$ \par

For the class of equations \eqref{noreqM} under study, we shall first determine the structure of $\mcs_{div}.$ It is easily found by a direct calculation that $\mathscr{D} (F_p) = \mathscr{D} (F_m) = \mathscr{D} (F_z)=0$ and that similarly one has $\mathscr{D} (S_{kj})=0$ for all $k=1,\dots, n$ and for all $j=1, \dots, \pzcm.$ Hence $\mathfrak{g}_F$ and $\mathfrak{g}_S$ are subalgebras of $\mcs_{div}.$ \par

Note that if we denote the scalar equation \eqref{noreq1} simply by $\mathscr{E}_n[y]=0,$ then the isotropic system \eqref{noreqM} takes the form $\mathscr{E}_n[y_j]=0,$ for $j=1, \dots, \pzcm,$ where $\by= (y_1,\dots, y_\pzcm).$  Now let $\bv= \sum_{ij} a_{ij} H_{ij}$ for some scalars $a_{ij} \in \K.$ If in this expression we set $a_{ji}= - a_{ij}$ for $j>i,$  the resulting expression for $ \mathscr{D} (\bv)$ reduces to
$$
a_{ii}\, \mathscr{E}_n [y_i]=0, \quad \text{for $i= 1, \dots, \pzcm,$}
$$
and this shows that $a_{ii}=0$ for all $i$ and that all rotations $R_{ij}= H_{ij} -H_{ji}$ with $i\neq j$ of the space $\K^\pzcm$ (or equivalently the elementary skew-symmetric endomorphisms of $\K^\pzcm$) are divergence symmetries. It is well known that these rotations generate the special orthogonal Lie algebra $\mathfrak{so}(\pzcm),$ which is a semisimple Lie algebra of dimension $\pzcm(\pzcm-1)/2.$ These results can be summarized as follows.
\begin{thm} \label{th:sdiv}
The Lie algebra $\mcs_{div}$ of divergence symmetries of \eqref{noreqM} is given by the Levi decomposition
\beqn \label{sdiv}
\mcs_{div} = \left[\mathfrak{g}_F \oplus \mathfrak{so}(\pzcm)\right] \ltimes \mathfrak{g}_S,
\eeqn
and has dimension $\frac{1}{2} (\pzcm^2 + \pzcm ( 2 n-1) + 6).$
\end{thm}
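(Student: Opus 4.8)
The plan is to assemble the decomposition \eqref{sdiv} from three ingredients that are either already established or nearly so: the subspaces of $\mathfrak{g}$ that survive the operator $\mathscr{D}$, the Levi structure of $\mathfrak{g}$ from Theorem~\ref{struct}, and the fact that $\mathscr{D}$ is linear. First I would observe that by linearity of $\mathscr{D}$ it suffices to test $\mathscr{D}$ on a general linear combination $\bv = \bv_F + \bv_H + \bv_S$, where $\bv_F \in \mathfrak{g}_F$, $\bv_H \in \mathfrak{g}_H$, $\bv_S \in \mathfrak{g}_S$. We already know from the discussion preceding the theorem that $\mathscr{D}(F_p)=\mathscr{D}(F_m)=\mathscr{D}(F_z)=0$ and $\mathscr{D}(S_{kj})=0$, so the contributions $\bv_F$ and $\bv_S$ drop out and the condition $\mathscr{D}(\bv)=0$ reduces to $\mathscr{D}(\bv_H)=0$. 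Thus $\mcs_{div} = \mathfrak{g}_F \dotplus \mathfrak{g}_S \dotplus \mathfrak{d}_H$ as a vector space, where $\mathfrak{d}_H := \mathfrak{g}_H \cap \mcs_{div}$ is the space of divergence symmetries inside $\mathfrak{g}_H$.

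Next I would identify $\mathfrak{d}_H$. Writing $\bv_H = \sum_{ij} a_{ij} H_{ij}$ and computing $\mathscr{D}(\bv_H)$ for the Lagrangian $L_0 = \tfrac12 \by\cdot\Delta$, the skew-symmetric part of the matrix $(a_{ij})$ contributes nothing (this is exactly the computation sketched before the theorem, where setting $a_{ji}=-a_{ij}$ for $j>i$ leaves only $\sum_i a_{ii}\mathscr{E}_n[y_i]$), forcing $a_{ii}=0$ for all $i$ and, more generally, forcing the symmetric part to vanish. Hence $\mathfrak{d}_H$ is exactly the span of the rotations $R_{ij}=H_{ij}-H_{ji}$, $i\neq j$, which is $\mathfrak{so}(\pzcm)$, of dimension $\pzcm(\pzcm-1)/2$. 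Combining with $\dim\mathfrak{g}_F = 3$ and $\dim\mathfrak{g}_S = n\pzcm$ gives
\[
\dim \mcs_{div} = 3 + n\pzcm + \frac{\pzcm(\pzcm-1)}{2} = \frac{1}{2}\bigl(\pzcm^2 + \pzcm(2n-1) + 6\bigr),
\]
as claimed.

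It then remains to upgrade the vector-space decomposition to the Levi decomposition \eqref{sdiv}. Here I would invoke the commutation relations of $\mathfrak{g}$ collected in Table~\ref{t:1}. Since $[\mathfrak{g}_H,\mathfrak{g}_F]=0$, we have $[\mathfrak{so}(\pzcm),\mathfrak{g}_F]=0$, so $\mathfrak{g}_F\oplus\mathfrak{so}(\pzcm)$ is a genuine direct sum of Lie algebras; it is semisimple since $\mathfrak{sl}_2$ is simple and $\mathfrak{so}(\pzcm)$ is semisimple. The space $\mathfrak{g}_S$ is an abelian ideal of $\mathfrak{g}$ by Lemma~\ref{hij-hpq}(b) and the isotropy argument, and it is stable under the subalgebra $\mathfrak{g}_F\oplus\mathfrak{so}(\pzcm)$ because $[F_i,S_{ha}]$ is a combination of the $S$'s and $[R_{ij},S_{ha}]=\delta_{aj}S_{hi}-\delta_{ai}S_{hj}\in\mathfrak{g}_S$; therefore $\mathfrak{g}_S$ is an ideal of $\mcs_{div}$. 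I would also check closure of $\mcs_{div}$ as a Lie algebra — the only brackets that could escape are $[\mathfrak{so}(\pzcm),\mathfrak{so}(\pzcm)]\subseteq\mathfrak{so}(\pzcm)$, which is immediate, and $[\mathfrak{g}_F,\mathfrak{g}_F]\subseteq\mathfrak{g}_F$ from Theorem~\ref{struct}(a). Since $\mathfrak{g}_S$ is a solvable (abelian) ideal and the complement $\mathfrak{g}_F\oplus\mathfrak{so}(\pzcm)$ is semisimple, this is precisely the Levi decomposition, with radical $\mathfrak{g}_S$.

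The one step that genuinely requires care — and where I expect the only real work to lie — is verifying $\mathscr{D}(R_{ij})=0$ and that no symmetric part of $(a_{ij})$ can survive, i.e. the explicit computation of $\mathscr{D}$ on $\mathfrak{g}_H$ for the chosen Lagrangian $L_0$. This reduces to showing that $E[\mathscr{S}(H_{ij})]$ picks out precisely the coefficient $\mathscr{E}_n[y_i]$ times $\delta_{ij}$-type terms; the scalar-equation computation $E_n[y_i \,\mathscr{E}_n[y_i]]$ not vanishing is what kills the diagonal (and hence symmetric) part, while for $i\neq j$ the cross terms $y_i\,\mathscr{E}_n[y_j]$ cancel in the antisymmetric combination because $\mathscr{E}_n$ is (formally self-adjoint and) linear. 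Everything else is bookkeeping with Table~\ref{t:1} and standard facts about $\mathfrak{so}(\pzcm)$ and Levi decompositions.
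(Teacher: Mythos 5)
Your proposal is correct and follows essentially the same route as the paper: the paper likewise exploits linearity of $\mathscr{D}$, the already-established vanishing of $\mathscr{D}$ on $\mathfrak{g}_F$ and $\mathfrak{g}_S$, and the computation of $\mathscr{D}$ on $\sum_{ij}a_{ij}H_{ij}$ to isolate the rotations $R_{ij}=H_{ij}-H_{ji}$ spanning $\mathfrak{so}(\pzcm)$, then assembles the Levi decomposition from the known commutation relations. Your additional explicit checks of closure, of the ideal property of $\mathfrak{g}_S$, and of semisimplicity of $\mathfrak{g}_F\oplus\mathfrak{so}(\pzcm)$ are details the paper leaves implicit, but they do not constitute a different argument.
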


Since  $\mcs_{var}$  is a subalgebra of $\mcs_{div},$  to find the generators of $\mcs_{var}$ we now let the generic generator $\bv$ for the test of the infinitesimal invariance condition \eqref{varsym} be just a linear combination of the generators of $\mcs_{div}.$ It is  easily verified by a direct calculation that
$$
\mathscr{S} (F_p) = \mathscr{S}(F_m)= \mathscr{S}(F_z)=0,\quad  \text { and}\quad \mathscr{S}(R_{ij})=0\quad  \text{for all $i\neq j.$ }
$$
This shows that $\mathfrak{g}_F$ and $\mathfrak{so}(\pzcm)$ are subalgebras of $\mcs_{var}.$  Thus we are left with testing only the vectors from $\mathfrak{g}_S,$ and so we set $\bv= \sum_{kj} b_{kj} S_{kj}.$  In this case $\mathscr{S}(\bv)$ is a linear differential polynomial in $y_1, \dots, y_\pzcm,$ in which the coefficient of $y_j^{(n)}$ is precisely $\frac{1}{2} \sum_{k=0}^\pzcm b_{kj} s_k.$ This shows that all the $b_{kj} $ must vanish so that no vector in $\mathfrak{g}_S$ may be a variational symmetry.

\begin{thm} \label{th:svar}
The variational symmetry algebra $\mcs_{var}$ of the class of equations \eqref{noreqM} is the semisimple lie algebra given by
$$
\mcs_{var} = \mathfrak{g}_F \oplus \mathfrak{so}(\pzcm),
$$
and it thus has dimension $\frac{1}{2} (\pzcm^2 -\pzcm +6).$
\end{thm}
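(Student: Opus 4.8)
The plan is to leverage two facts already in hand: that every variational symmetry is a divergence symmetry, so $\mcs_{var}\subseteq\mcs_{div}$, and that $\mcs_{div}$ has been completely pinned down in Theorem~\ref{th:sdiv} as $[\mathfrak{g}_F\oplus\mathfrak{so}(\pzcm)]\ltimes\mathfrak{g}_S$. Because the operator $\mathscr{S}$ of \eqref{s(v)} is linear in $\bv$, it suffices to impose the infinitesimal variational criterion $\mathscr{S}(\bv)=0$ on a generic element $\bv=\alpha_p F_p+\alpha_m F_m+\alpha_z F_z+\sum_{i<j}c_{ij}R_{ij}+\sum_{k,j}b_{kj}S_{kj}$ of $\mcs_{div}$. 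Using the vanishing identities $\mathscr{S}(F_p)=\mathscr{S}(F_m)=\mathscr{S}(F_z)=0$ and $\mathscr{S}(R_{ij})=0$ for $i\neq j$ recorded above — direct verifications against the Lagrangian $L_0=\tfrac12\,\by\cdot\Delta$ — the condition $\mathscr{S}(\bv)=0$ collapses to $\mathscr{S}\!\big(\sum_{k,j}b_{kj}S_{kj}\big)=0$. So the crux of the theorem is that $\mathfrak{g}_S\cap\mcs_{var}=0$.

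For that step I would compute $\mathscr{S}(\bv)$ for $\bv=\sum_{k,j}b_{kj}S_{kj}$ directly from \eqref{s(v)}: since each $S_{kj}=s_k\pd_j$ has vanishing $\xi$-component, the term $L_0\,D_x\xi$ drops and $\mathscr{S}(\bv)=\bv^{(n)}(L_0)$, which (as $L_0$ is quadratic and $\bv$ has $x$-dependent coefficients) is a linear differential polynomial in $y_1,\dots,y_\pzcm$ and their $x$-derivatives. The only piece one actually needs is the coefficient of the top-order term $y_j^{(n)}$, which works out to $\tfrac12\sum_{k=0}^{n-1}b_{kj}\,s_k$; since the $s_k$ of \eqref{sk} are $n$ linearly independent solutions of \eqref{noreq1}, hence linearly independent as functions of $x$, this coefficient vanishes identically only if $b_{kj}=0$ for every $k$, and letting $j$ run over $1,\dots,\pzcm$ forces $\bv=0$. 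Consequently $\mcs_{var}=\mathfrak{g}_F\dotplus\mathfrak{so}(\pzcm)$ as a vector space.

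Finally I would read off the Lie-algebraic structure. The factor $\mathfrak{so}(\pzcm)$, generated by the rotations $R_{ij}$, sits inside $\mathfrak{g}_H$, and $[\mathfrak{g}_H,\mathfrak{g}_F]=0$ was established in the proof of Theorem~\ref{struct}; hence $\mathfrak{g}_F$ and $\mathfrak{so}(\pzcm)$ commute and the sum is a direct sum of Lie algebras, $\mcs_{var}=\mathfrak{g}_F\oplus\mathfrak{so}(\pzcm)$. Since $\mathfrak{g}_F\cong\mathfrak{sl}_2$ by Theorem~\ref{struct}(a) and $\mathfrak{so}(\pzcm)$ are semisimple, $\mcs_{var}$ is semisimple, with $\dim\mcs_{var}=3+\tfrac12\pzcm(\pzcm-1)=\tfrac12(\pzcm^2-\pzcm+6)$. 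The only genuinely computational ingredients are the vanishing identities $\mathscr{S}(F_p)=\mathscr{S}(F_m)=\mathscr{S}(F_z)=\mathscr{S}(R_{ij})=0$ and the extraction of the $y_j^{(n)}$-coefficient from $\mathscr{S}(\sum b_{kj}S_{kj})$; these are routine $n$th-prolongation computations with the explicit quadratic Lagrangian $L_0$, so I expect no real obstacle beyond careful bookkeeping in the prolongation formula.
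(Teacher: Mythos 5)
Your proposal is correct and follows essentially the same route as the paper: restrict to $\mcs_{div}$ from Theorem \ref{th:sdiv}, verify $\mathscr{S}(F_p)=\mathscr{S}(F_m)=\mathscr{S}(F_z)=\mathscr{S}(R_{ij})=0$ directly, and then kill $\mathfrak{g}_S$ by extracting the coefficient $\tfrac12\sum_k b_{kj}s_k$ of $y_j^{(n)}$ in $\mathscr{S}\big(\sum_{k,j}b_{kj}S_{kj}\big)$ and invoking the linear independence of the $s_k$. Your added remarks on the direct-sum structure (via $[\mathfrak{g}_H,\mathfrak{g}_F]=0$) and the corrected summation range $k=0,\dots,n-1$ are consistent with, and slightly more explicit than, the paper's own argument.
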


\section{Concluding remarks}

We wish to recall that although divergence symmetries satisfy a more relaxed condition than that required for a vector field to be a variational symmetry, each of them also determines a conservation law associated with the equation \cite{olver1}. Moreover,  in the case of {\small \sc ode}s conservation laws yield  true constants of motion represented by first integrals. These first integrals often play a crucial role  in the study of solutions and properties of differential equations, including questions related to stability \cite{nitsch05,rzeszotko05} or global existence \cite{chavarrigaJ05,gine012,hounieJ011}. On the other hand, the variational symmetry group, which is the Lie group generated by $\mcs_{var},$ leaves the integral \eqref{varprob} invariant, and each variational symmetry can also be used to reduce the order of the associated Lagrangian equations by two \cite{olver1}.\par

 The structure of the Lie algebra $\mcs_{div}$ of divergence symmetries of  systems of {\small \sc ode}s of order $n\geq 3$  in the canonical classes  which we found corresponds  exactly to a generalization to arbitrary orders of the structure obtained in \cite[Eq. (3.38)]{lopez-slde} for second-order equations. However, this Lie algebra is called in that paper the Lie algebra of variational symmetries. But it can first be easily verified that no symmetry vector containing as a term a  non-Cartan symmetry as obtained in \eqref{ncartansym} may be a divergence symmetry, and in particular a variational symmetry. This means that to get the dimension of $\mcs_{div}$ for the case of second-order equations, it suffices to replace $n$ by $2$ in Theorem \ref{th:sdiv}. When this is done we recover the dimension found in \cite[Theorem 3]{lopez-slde} as the dimension of what is called in that paper the variational symmetry algebra, but which is in fact by the very formula used for its determination in the same paper \cite[Eq. (3.8)]{lopez-slde}, just the divergence symmetry algebra.\par

On the basis of the upper bound given in \cite{gonzal-newres} for systems of {\small \sc ode}s,  the dimension of the Lie point symmetry algebra that we found for equations in the canonical class is indeed maximal for systems of {\small \sc ode}s of the form \eqref{gen-sde}.  The other problem that remains to be solved is whether any system of {\small \sc ode}s of a given order and dimension that is of maximal symmetry belongs necessarily, as it is the case for scalar {\small \sc ode}s and for systems of {\small \sc ode}s of order $n \leq 3$, to the canonical class of the corresponding trivial equation. In other words, it is still not known whether the Lie point symmetry algebra of any  system of {\small \sc ode}s of maximal symmetry is  necessarily isomorphic to the  Lie algebra $\fg$ of \eqref{noreqM} found in Theorem \ref{struct}.

\comment{
\section*{Acknowledgement}
Funding: This work was supported by the National Research Foundation of South Africa [grant number: 97822].
}


\end{document}